\numberwithin{equation}{section}
\newtheorem{thm}{Theorem}[section]
\newtheorem{lem}[thm]{Lemma}
\newtheorem{rem}[thm]{Remark}
\theoremstyle{definition}
\numberwithin{equation}{section}
\DeclareMathOperator{\dist}{dist}
\newcommand{\A}{\mathcal{A}}
\newcommand{\C}{\mathcal{C}}
\newcommand{\N}{\mathbb{N}}
\newcommand{\Q}{\mathbb{Q}}
\newcommand{\R}{\mathbb{R}}
\begin{document}

%\catchline{}{}{}{}{} % Publisher's Area please ignore

\markboth{T. Drwi\k{e}ga and P. Oprocha}{$\omega$-chaos without infinite LY-scrambled set on Gehman dendrite}

\title{$\omega$-chaos without infinite LY-scrambled set on Gehman dendrite}

\author{TOMASZ DRWI\k{E}GA}

\address{Faculty of Applied Mathematics, AGH University of Science and Technology, \\
al. A. Mickiewicza 30, 30-059 Krak\'ow, Poland\\
drwiega@agh.edu.pl}

\author{PIOTR OPROCHA}
\address{Faculty of Applied Mathematics, AGH University of Science and Technology, \\
al. A. Mickiewicza 30, 30-059 Krak\'ow, Poland\\
	-- and --\\
	National Supercomputing Centre IT4Innovations, Division of the University of Ostrava, \\
	Institute for Research and Applications of Fuzzy Modeling, \\
	30. dubna 22, 70103 Ostrava,
	Czech Republic\\
	oprocha@agh.edu.pl}
\keywords{$\omega$-chaos, LY-chaos, dendrite.}

\begin{abstract}
We answer the last question left open in [Z.~Ko\v{c}an, \emph{Chaos on one-dimensional compact metric spaces}, Internat. J. Bifur. Chaos Appl. Sci. Engrg. \textbf{22}, article id: 1250259 (2012)] which asks whether there is a relation between an infinite LY-scrambled set and $\omega$-chaos for dendrite maps. We construct a continuous self-map of a dendrite without an infinite LY-scrambled set but containing an uncountable $\omega$-scrambled set. 
\end{abstract}

\maketitle

%\subjclass[2010] {37B05, 37B10,  37B20, 54F50}

%\begin{multicols}{2}
\section{Introduction}

In 2012 in \cite{Koc12} characterized various chaotic properties of maps on dendrites. In this paper he raised three open questions needed to complete characterization of considered notions:
\begin{enumerate}[(1)]
\item\label{q:1} Does the existence of an uncountable $\omega$--scrambled set imply distributional chaos?
\item\label{q:2} Does the existence of an uncountable $\omega$--scrambled set imply existence of an infinite LY--scrambled set? 
\item\label{q:3} Does distributional chaos imply the existence of an infinite LY--scrambled set?
\end{enumerate}

Recently, in \cite{Drw18} Drwi\k{e}ga answered questions \eqref{q:1} and \eqref{q:3}. Therefore question \eqref{q:2} is the only which still remains open.
In the present paper we use some special properties of Sturmian subshift to construct appropriate example 
and prove the following Theorem.

\begin{thm}\label{thm:main}
There exists a continuous self-map $f$ on dendrite such that:
	\begin{enumerate}[(1)]
		\item  $f$ has an uncountable $\omega$-scrambled set,
		\item $f$ does not have an infinite LY--scrambled set.
	\end{enumerate}
\end{thm}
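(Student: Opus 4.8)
The plan is to construct $f$ on the Gehman dendrite by putting, on an invariant Cantor set of endpoints, a map with one fixed point $p$ around which the dynamics breaks into uncountably many pairwise disjoint minimal ``satellites,'' using the rigidity of a Sturmian subshift to keep the Li--Yorke structure finite. I first record the features of a Sturmian subshift $(\Sigma,\sigma)$ that I want to exploit: it is minimal, uniquely ergodic and of zero entropy, and it is an almost one-to-one extension of an irrational rotation via a factor map $\pi\colon\Sigma\to\R/\Z$ which is at most two-to-one; since a rotation is distal, every proximal pair of $\Sigma$ lies in a single fibre of $\pi$, so \emph{a set of pairwise proximal points of $\Sigma$ has at most two elements}. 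Because a Li--Yorke pair is in particular proximal, LY--scrambled subsets of $\Sigma$ also have at most two points; this ``at most two proximal cells'' phenomenon, together with the linear recurrence and low complexity of Sturmian sequences, is the lever used to control proximality on the whole dendrite.

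Next I would build a Cantor system $(Z,g)$ with a fixed point $p$, an uncountable family $\{M_t\}_{t\in T}$ of pairwise disjoint minimal subsets with $p\notin M_t$ for all $t$ (the $M_t$ taken to be Sturmian subshifts with rationally independent rotation numbers, or odometers, so that they too have ``at most two proximal cells''), and, for each $t$, a point $z_t$ with $\omega_g(z_t)=\{p\}\cup M_t$. The $z_t$ are the delicate ingredient: using the combinatorics of $\Sigma$ one schedules the orbit of each $z_t$ to alternate between a shrinking neighbourhood of $p$ and $M_t$ along sparsely spaced, ``almost disjoint'' sets of times, in such a way that $g^m z_t$ and $g^n z_s$ are non-proximal whenever $(t,m)\neq(s,n)$. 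Finally I would realize $(Z,g)$ as the restriction of a continuous self-map $f$ of the Gehman dendrite $D$ to an invariant Cantor set of endpoints, and extend $f$ over the rest of $D$ so that every point of $D$ is forward asymptotic to a point of $Z$; thus the edge and branch points, and the remaining endpoints, are attracted to $Z$ without producing new recurrence.

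Granting the construction, (1) follows with $S=\{z_t:t\in T\}$: for $t\neq s$ one has $\omega_f(z_t)\cap\omega_f(z_s)=\{p\}\neq\emptyset$, $\omega_f(z_t)\setminus\omega_f(z_s)=M_t$ is uncountable, and $\omega_f(z_t)$ is not contained in the set of periodic points since $M_t$ is infinite minimal. For (2), let $B$ be an LY--scrambled set; all its pairs are proximal. Since every point of $D$ is forward asymptotic to a point of $Z$, asymptotic points have the same proximal relations, and an asymptotic pair is never a Li--Yorke pair, the map sending $b\in B$ to an asymptotic representative $r(b)\in Z$ is injective and the pairs of $\{r(b):b\in B\}$ are proximal in $Z$. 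A case analysis of proximality in $Z$ --- distinct $M_t$ stay uniformly apart, $p$ is proximal to no point of any $M_t$, the orbit points $g^n z_t$ are pairwise non-proximal, and each $M_t$ has at most two proximal cells --- shows that a set of pairwise proximal points of $Z$ has at most three elements. Hence $|B|\le 3$, so $f$ has no infinite LY--scrambled set.

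The crux is the second paragraph. One must produce uncountably many orbits that all meet $p$ in their $\omega$--limit sets yet are pairwise non-proximal, and remain so under time-shifts; one must simultaneously make every piece into a genuine continuous self-map of the Gehman dendrite with all of $D\setminus Z$ correctly attracted; and one must rule out any infinite Li--Yorke clique hidden among the auxiliary orbit points or among the edge and branch points. Coordinating the recurrence times, the proximality relation, and the combinatorial rigidity needed for the endpoint map to extend continuously over the dendrite is precisely where the special properties of the Sturmian subshift are used.
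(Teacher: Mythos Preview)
Your outline shares the paper's framework (Gehman dendrite with a subshift on the endpoints, uncountably many Sturmian ``satellites'' supplying the uncountable parts of the $\omega$-limit sets, and the non-endpoint arcs rendered dynamically harmless), but it diverges at the decisive point: you take the \emph{common} piece of all the $\omega$-limit sets to be a single fixed point $p$, whereas the paper takes it to be an entire minimal Sturmian subshift $Z=S_\beta$. Concretely, the paper picks for each $\alpha$ a point $a_\alpha\in S_\alpha$ and a point $b_\alpha\in Z$ lying on a singleton fibre of the almost one-to-one factor $Z\to S^1$, sets $x_\alpha=a_\alpha\diamond b_\alpha$ via Piku\l a's operation, and reads off $\omega(x_\alpha)$ from Lemma~\ref{lem:Pikula}: it is $S_\alpha\cup Z$ together with an explicit countable set of ``bridge'' points. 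The uncountable $\omega$-scrambled set is $\{x_\alpha\}$; the absence of a three-point LY-scrambled set comes from the fact that when two orbits $x_\alpha,x_\gamma$ get close, they must both be shadowing $Z$, but they do so along the \emph{distal} orbits of $b_\alpha$ and $b_\gamma$, hence they are never proximal.

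This comparison exposes the gap in your plan. Inside any subshift the equality $\omega_g(z_t)=\{p\}\cup M_t$ is not attainable: if $p=0^\infty$ and the orbit of $z_t$ alternates between long $0$-blocks and long $M_t$-words, the shifts of $z_t$ that sit at the transitions accumulate on sequences of the form $w\,0^\infty$ with $w\in\mathcal{L}(M_t)$, which lie neither in $\{p\}$ nor in $M_t$. So your $\omega$-limit sets necessarily contain extra bridge points, and your case analysis for proximality does not cover them. More importantly, routing all $z_t$ through a single point $p$ forces you to manufacture, by hand, uncountably many orbits that visit $p$ along ``almost disjoint'' time sets and remain pairwise non-proximal under every time shift; you flag this as the crux but do not do it. The paper sidesteps the whole difficulty by replacing $p$ with the minimal set $Z$: there is then plenty of room for the $x_\alpha$ to approach $Z$ along pairwise distal orbits (the $b_\alpha$), and no delicate scheduling is needed. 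That substitution---common minimal set instead of common fixed point, plus the explicit $\diamond$ construction giving control of the bridge points---is the idea your proposal is missing.
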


This provides a negative answer to this question~\eqref{q:2}.

\section{Definitions and notations} \label{sec:def}
Throughout
this paper $\N$ denotes the set $\{1,2,3, \dots \} $ and $\N_0 = \N \cup \{0\}.$ By a \emph{dynamical system} we mean a pair $(X,f)$, where $(X,\rho)$ is a compact metric space and $f$ is a continuous map from $X$ to itself.  
The \emph{orbit} of $x \in X$ is the set $O_f(x):=\{f^{k}(x): k\geq 0\}$, where $f^k$ stands for the $k$-fold composition of $f$ with itself. 
For $x \in X$, its $\omega$-$limit$ $set$ is defined by $\omega_f(x):=\cap_n \overline{O_f(f^n(x))}.$
A set $A \subset X$ is \emph{invariant} under $f$ if $f(A) \subset A$ and \emph{minimal} for $f$ if $A$ is nonempty, closed, invariant under $f$, and does not contain any proper subset which satisfies these three conditions. A dynamical system $(X,f)$ is \textit{minimal} if $X$ is a minimal set for $f$. It is known that $(X,f)$ is minimal if and only if every $x\in X$ has dense orbit or, equivalently, $\omega_f (x) =X$ for each $x \in X$. 
By $S^1$ we denote the unit circle identified with the interval $[0,1)$. For $\alpha \in  S^1$ the rotation by $\alpha$ is denoted by$R_\alpha$, that is $R_{\alpha}(x)={\text {mod}}_1 (x+\alpha)$ for $x \in S^1$. It is known that if $\alpha$ is irrational, then $(S^1,R_\alpha)$ is minimal (e.g. see \cite{Kur03}).

Let $X$ and $Y$ be compact metric spaces and let $f\colon X \to X$ and $g\colon Y \to Y$ be continuous map. If there is a continuous surjective map $\phi\colon X \to Y$ with $\phi \circ f= g \circ \phi,$ then $f$ and $g$ are \emph{semiconjugate} (by $\phi$). The map $\phi$ is called a \emph{semiconjugacy} or a \emph{factor map}, the map $g$ is called a \emph{factor} of $f$ and the map $f$ is called an \emph{extension} of $g$. If $\phi$ in the definition above is a homeomorphism then we call it a \emph{conjugacy}.

A pair of two different points $(x,y)\in X^2$ is
\begin{itemize}
 \item  \emph{proximal} if $$\liminf_{n \to \infty} \rho(f^n(x), f^n(y))=0,$$ 
 \item \emph{asymptotic} if $$\lim_{n \to \infty} \rho(f^n(x), f^n(y))=0,$$ 
 \item \emph{scrambled} or \emph{Li-Yorke} if it is proximal but not asymptotic.
\end{itemize}

A set $S \subseteq X$ is \emph{LY--scrambled} for $f$, if it contains at least two distinct points and every pair of distinct points in $S$ is scrambled. We say that $f \colon X \to X$ is  $\text{LY}$ chaotic if there exists a LY--scrambled set (this definition derives from \cite{Liyor75}). 
We say after \cite{Li93},  that a set $S \subseteq X$ is called $\omega$-scrambled for $f$ if it contains at least two points and for any $x,y \in S$ with $x\neq y,$ we have
\begin{enumerate}[(i)]
	\item $\omega_f (x) \setminus \omega_f (y)$ is uncountable,
	\item $\omega_f (x) \cap \omega_f (y)$ is nonempty,
	\item $\omega_f (x) $ is not contained in the set of periodic points.
\end{enumerate}
The map $f$ is $\omega$-$chaotic$ if there is at least a two-point $\omega$-scrambled set for $f$.

An \emph{arc} is any topological space homeomorphic to the interval $[0,1]$. 
A \emph{continuum} is a nonempty connected compact metric space. A \emph{dendrite} is a locally connected continuum containing no subset homeomorphic to the circle.
Let $X$ be a compact arcwise connected metric space and $v\in X$.
We say that $S\subset X$ is an \emph{$n$-star with center $v\in S$}
if there is a continuous injection $\varphi\colon S\to \C$ such that $\varphi(v)=0$
and $\varphi(S)=\{r \exp(\frac{2 k \pi i}{n}): r\in[0,1],\ k=1,2,\dotsc,n\}$.
The \emph{valence} of $v$ in $X$, denoted by $val(v)$,
is the number (which may be $\infty$)
$$\sup\{n\in\N\colon\text{there exists an }n\text{-star with center }
v\text{ contained in }X\}.
$$
The point $v$ is called an \emph{endpoint} of $X$ if $val(x)=1$,
and a \emph{branching point} of $X$ if $val(x)\geq 3$.
\emph{The Gehman dendrite} is a dendrite $G$ having the set of end points homeomorphic to the Cantor ternary set in $[0,1]$ such that all branching points  $G$ have valence $3$.

Finally, let us present some standard notation related to symbolic dynamics. 
Let $\A$ be any finite set (an \emph{alphabet}) and let $\A^*$ denote the set of all finite \emph{words} over $\A$ including the empty word. 
For any word $w \in \A^*$ we denote by $|w|$ the length of $w$, that is the number of letters which form this word. 
If $w$ is the empty word then we put $|w|=0$. An \emph{infinite word} is a mapping $w: \N \to \A$, in other words it is an infinite sequence $w_1 w_2 w_3 \dots$ where $w_i \in \A$ for any $i \in \N$. The set of all infinite words over an alphabet $\A$ is denoted by ${\A}^{\N}$. 
We endow  ${\A}^{\N}$ with the product topology of discrete topology on $\A.$ 
By $0^{\infty}$ we will denote the infinite word $0^{\infty}=000 \dots \in {\A}^{\N}.$ 
If $x \in \A^\N$ and $i,j \in \N$ with $i \leq j$ then we denote $x_{[i,j)}=x_{i}x_{i+1} \dots x_{j-1}$ (we agree with that $x_{[i,i)}$ is empty word)
and given $X \subset {\A}^{\N}$
by $\mathcal{L}(X)$ we denote \emph{the language} of $X$, that is, the set $\mathcal{L}(X):=\{x_{[1,k)}: x \in X, k> 0 \}$.  We write $\mathcal{L}_n(X)$ for the set of all words of length $n$ in $\mathcal{L}(X).$ 
If $u_k$ is a sequence of words such that $|u_k|\longrightarrow \infty$ then we write
$z=\lim_{k\to \infty} u_k$ if the limit $z=\lim_{k\to \infty} u_k 0^\infty$ exists in $\A^\N$.
Let $n \in \N$ and $\sigma$ a shift map defined on ${\A}^{\N}$ by $$(\sigma(x))_i=x_{i+1} \text{  for } i\in \N.$$ By  $\Sigma^{+}_{n}$ we denote a dynamical system formed by $ (\{0, \dots, n-1\}^{\mathbb{N}} , \sigma).$ 
If $S \subset {\A}^{\N}$ is nonempty, closed and $\sigma$-invariant then $S$ together with the restriction $\sigma |_{S} \colon S \to S$ (or even the set $S$) is called a \emph{subshift} of $\Sigma^{+}_{n}.$ Recall that the space ${\A}^{\N}$  is metrizable by the metric  $\rho: {\A}^{\N} \times {\A}^{\N} \to \R$ given for $x,y \in {\A}^{\N}$ by
$$ \rho(x,y)= \begin{cases} 2^{-k}, &\text{if } x \neq y, \\ 0, &\text{otherwise} \end{cases} $$ where $k$ is the length of maximal common prefix of $x$ and $y$, that is $k=\max \{ i \geq 1: x_{[1,i)}=y_{[1,i)}\}.$

\section{The Gehman Dendrite}\label{sec:lem}
Let us recall the construction of a continuous dendrite map from \cite{KoKor11}. Let $G$ be the Gehman dendrite and fix any point $p\in G$ with $val(p)=2$. 
For any distinct $a,b\in G$ we will denote by $[a,b]$ the unique arc in $G$ joining these points.
It is easy to see that branching points in $G$ can be arranged in such a way that each of the following intervals contains branching points (or point $p$) exactly at endpoints:
$B_0=[p,p_0], B_1=[p,p_1]$, and for every $n \in \N$, $B_{i_1 i_2  \dots i_n}=[p_{i_1 i_2 \dots i_n}, p_{i_1 i_2 \dots  i_{n+1}}]$ where every $i_k$ is either $0$ or $1$. 
It is well known that by the above construction we will cover all points of $G$ but endpoints (see Figure~\ref{fig:GehB}).
Furthermore, every point $x \in E$ can be uniquely associated to a sequence of zeros and ones $i_1 i_2 i_3 \dots$ in such way that it is the limit of the codes of the arcs converging to the point.
\begin{figure}[htp]
	\centering
		\includegraphics[width=0.7\linewidth]{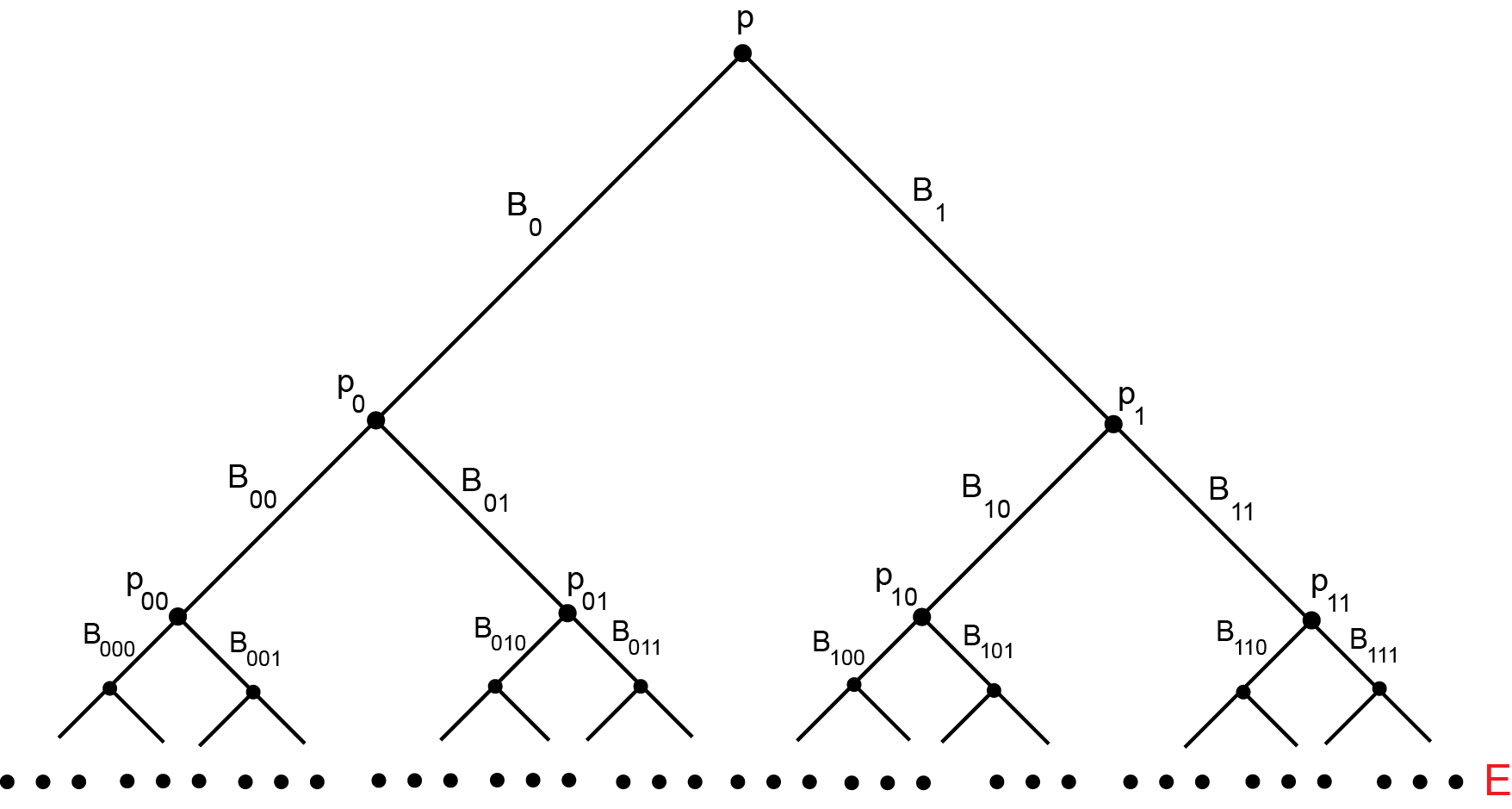}
	\caption{The Gehman dendrite}
	\label{fig:GehB}
\end{figure} 
 
 We define a continuous map $g$ on a dendrite $G$ in the following way. Let $g(B_0)=g(B_1)=\{p\}$. For every $i_1,i_2, \dots, i_n$, let $g |_{B_{i_1 i_2  \dots  i_n}}: B_{i_1 i_2  \dots  i_n} \to B_{i_2 i_3  \dots i_n}$ be a homeomorphism such that $g(p_{i_1 i_2  \dots  i_n})=p_{i_2 i_3  \dots  i_n}$, and let $g$ act on $E$ as the shift map on the space $\Sigma_{2}^+$.
Let $X$ be a closed $g$-invariant subset of $E$. Denote $$D_X= \bigcup_{x_{\xi} \in X} [x_{\xi},p]$$ and $$f=g |_{D_X}.$$ 
Now, let us recall usefull lemmas from \cite{Are01} about the Gehman dendrite.
\begin{lem}\label{lem:subdendrite}
If $X\subset \Sigma^{+}_{n}$ is a subshift then the set $D_X$ is an $f$-invariant subdendrite of the Gehman dendrite $G$.
\end{lem}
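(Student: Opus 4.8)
The plan is to verify the three defining properties of a dendrite for $D_X$ and then to check $f$-invariance separately. First I would argue that $D_X$ is a \emph{subcontinuum} of $G$: since $X$ is closed in $E$ and $E$ (being homeomorphic to the Cantor set) is compact, $X$ is compact; the map $x_\xi \mapsto [x_\xi, p]$ sending a point to its arc to $p$ is continuous in the Hausdorff metric (this is where the Gehman structure is used — the arc $[x_\xi,p]$ is the nested union of the intervals $B_{i_1}, B_{i_1 i_2}, \dotsc$ coded by the prefixes of $\xi$, so nearby codes give Hausdorff-close arcs), hence $D_X = \bigcup_{x_\xi \in X}[x_\xi,p]$ is a continuous image of a compact set under the union map and is therefore compact; it is connected because every arc in the union contains the common point $p$. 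Thus $D_X$ is a subcontinuum of the dendrite $G$.

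Next, a subcontinuum of a dendrite is automatically a dendrite: it is a continuum, it contains no simple closed curve (since $G$ contains none), and local connectedness is inherited by subcontinua of dendrites (this is a standard fact about dendrites — any subcontinuum of a dendrite is a dendrite, see e.g. Nadler's continuum theory). Alternatively one can check local connectedness directly from the coding, but invoking the known structural theorem for dendrites is cleaner. This establishes that $D_X$ is a subdendrite of $G$.

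Finally I would check $f$-invariance, i.e. $g(D_X) \subseteq D_X$. Take a point $z \in D_X$; then $z$ lies on some arc $[x_\xi, p]$ with $\xi = i_1 i_2 i_3 \dotsc \in X$. If $z \in B_{i_1}$ (the first interval of this arc, $i_1 \in \{0,1\}$), then $g(z) = p \in D_X$ since $g(B_0)=g(B_1)=\{p\}$. Otherwise $z$ lies in some $B_{i_1 i_2 \dots i_n}$ with $n \ge 2$, and by construction $g$ maps this interval onto $B_{i_2 i_3 \dots i_n}$, which is contained in the arc $[x_{\sigma(\xi)}, p]$; since $X$ is $\sigma$-invariant, $\sigma(\xi) \in X$, so $g(z) \in D_X$. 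The endpoint case $z = x_\xi \in X$ is covered because $g$ acts as the shift on $E$ and $\sigma(\xi)\in X$. Hence $g(D_X) \subseteq D_X$, so $D_X$ is $f$-invariant.

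The main obstacle, and the one point that deserves genuine care rather than a one-line dismissal, is the \emph{closedness} (compactness) of $D_X$: one must be sure that a limit of points $z_k \in [x_{\xi^{(k)}}, p]$ again lies in some arc $[x_\xi, p]$ with $\xi \in X$. This follows from continuity of the arc-assignment together with closedness of $X$, but it is exactly the step where the specific Gehman coding (nested intervals indexed by prefixes, endpoints coded by infinite $\{0,1\}$-sequences) is doing the work, so I would spell it out rather than leave it implicit; everything else (connectedness via the common point $p$, no simple closed curves, inherited local connectedness, and the invariance computation) is routine.
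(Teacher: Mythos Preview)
Your argument is correct. Note, however, that the paper does not actually supply a proof of this lemma: it is stated as a recalled result from \cite{Are01} (together with Lemma~\ref{lem:Geh}), so there is no ``paper's own proof'' to compare against. Your write-up is a perfectly reasonable self-contained verification: the compactness step via Hausdorff-continuity of $\xi \mapsto [x_\xi,p]$ (equivalently, the direct sequential argument you sketch at the end), connectedness through the common point $p$, the standard fact that a subcontinuum of a dendrite is a dendrite, and the case-split for $g$-invariance are all sound. One cosmetic remark: the lemma as stated in the paper writes $X\subset \Sigma^+_n$, but the Gehman dendrite construction in Section~\ref{sec:lem} uses binary coding, so in context one should read $n=2$; your proof correctly works with $\{0,1\}$-codes.
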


\begin{lem}\label{lem:Geh}
If $X$ is a closed and nonempty subset of  $\Sigma_{2}^+$ without isolated points, then the set $D_X= \bigcup_{x_{\xi} \in X} [x_{\xi},p] \subset G$ is homeomorphic with the Gehman dendrite. 
\end{lem}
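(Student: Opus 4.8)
The strategy is to exhibit a homeomorphism between $D_X$ and the standard Gehman dendrite $G$ by comparing both to the ``canonical'' model built on the full endpoint set $E\cong\{0,1\}^\N$, and then restricting the coordinate correspondence to $X$. First I would recall that, by construction, every point of $G$ other than an endpoint lies in a unique arc $B_w$ with $w\in\{0,1\}^*$, and the branching points $p_w$ are indexed by finite binary words, while each endpoint $x_\xi$ corresponds to an infinite binary sequence $\xi\in\{0,1\}^\N$ via $x_\xi=\lim_{n}p_{\xi_{[1,n)}}$; the arc $[x_\xi,p]$ is then the nested union $\bigcup_n B_{\xi_{[1,n)}}$ together with its limit endpoint. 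Thus $D_X=\bigcup_{\xi\in X}[x_\xi,p]$ is precisely the subcontinuum of $G$ carried by the ``subtree'' of finite words that are prefixes of some $\xi\in X$, i.e. $D_X$ consists of $p$, all branching points $p_w$ with $w$ a prefix of some element of $X$, the corresponding arcs $B_w$, and the endpoints $\{x_\xi:\xi\in X\}$.

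Next I would verify the defining properties of a Gehman dendrite for $D_X$. By Lemma~\ref{lem:subdendrite}, $D_X$ is a subdendrite of $G$, hence a dendrite; in particular it is a locally connected continuum without simple closed curves. Its branching points are exactly the $p_w$ with $w$ a prefix of at least \emph{two} distinct elements of $X$ (so that both the arc $B_{w0}$ and the arc $B_{w1}$ survive in $D_X$), and each such point has valence $3$ just as in $G$ — it has the arc back towards $p$ on one side and the two arcs $B_{w0}$, $B_{w1}$ on the others; no fourth branch can appear because none was present in $G$. The set of endpoints of $D_X$ is $\{x_\xi:\xi\in X\}\cong X$ as a topological space, since the map $\xi\mapsto x_\xi$ from $\{0,1\}^\N$ (with the product topology) to $E\subset G$ is a homeomorphism onto its image, carrying the metric $\rho$ to the subspace metric on $E$. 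Here is where the hypothesis on $X$ enters: $X$ is closed and nonempty with no isolated points, so $X$ is a nonempty compact metrizable zero-dimensional space with no isolated points, which by Brouwer's characterization is homeomorphic to the Cantor set. (One should also note $X$ is infinite, so $D_X$ genuinely has branching points.)

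Finally I would assemble a homeomorphism $D_X\to G$. The cleanest route is to invoke a topological characterization: any dendrite whose set of endpoints is homeomorphic to the Cantor set and all of whose branching points have valence $3$ is homeomorphic to the Gehman dendrite — this is exactly the defining description of $G$ recalled in Section~\ref{sec:def}, and the uniqueness of such a dendrite up to homeomorphism is the standard fact (see \cite{Are01}) referenced just before the lemma. Having checked in the previous paragraph that $D_X$ has these two properties, we conclude $D_X$ is homeomorphic to $G$. If one prefers an explicit map instead of quoting uniqueness, one builds it inductively on the indexing words: fix a homeomorphism $h\colon X\to\{0,1\}^\N$, push it forward to a bijection on the trees of prefixes (taking care that a prefix of length $n$ in the $X$-tree maps to a prefix of the corresponding length after collapsing the ``non-branching'' segments), and extend arc-by-arc to a homeomorphism of the dendrites, using that a nested increasing union of arcs with a single limit endpoint is again an arc.

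The main obstacle is the bookkeeping in this last step: in $D_X$ a point $p_w$ need not be a branching point (if $w$ extends to only one element of $X$), so the combinatorial tree of surviving arcs is \emph{not} literally the binary tree, and one must suppress the degree-$2$ vertices to recover a genuine binary branching structure before matching it with that of $G$. Quoting the uniqueness-up-to-homeomorphism characterization of the Gehman dendrite sidesteps this entirely, which is why I expect the paper to take that route.
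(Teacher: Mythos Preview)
The paper does not supply its own proof of this lemma: both Lemma~\ref{lem:subdendrite} and Lemma~\ref{lem:Geh} are explicitly introduced as results recalled from \cite{Are01}, and no argument is given. Your proposed route---check that $D_X$ is a dendrite whose set of endpoints is a Cantor set (via Brouwer's characterization, using that $X$ is compact, metrizable, zero-dimensional and perfect) and whose branching points all have valence~$3$, then invoke the topological uniqueness of such a dendrite---is exactly the characterization theorem proved in that reference, so your sketch matches what a direct proof would do.

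Two small points are worth flagging. First, you appeal to Lemma~\ref{lem:subdendrite} to conclude that $D_X$ is a subdendrite, but that lemma as stated assumes $X$ is a subshift, while Lemma~\ref{lem:Geh} only assumes $X$ is closed; you should instead argue directly that $D_X$ is closed in $G$ (immediate from compactness of $X$ and continuity of $\xi\mapsto x_\xi$). Second, your assertion that $\End(D_X)=\{x_\xi:\xi\in X\}$ is not correct in full generality: if every sequence in $X$ begins with the same symbol, then only one of $B_0$, $B_1$ survives in $D_X$, the point $p$ drops to valence~$1$, and $p$ becomes an additional \emph{isolated} endpoint of $D_X$---so $\End(D_X)$ fails to be a Cantor set. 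This edge case is harmless for the paper's application (the set $\mathbb{X}$ in \eqref{eq:X} is a subshift and hence meets both cylinders $[0]$ and $[1]$), but your write-up should either add the hypothesis that both symbols occur as initial letters in $X$, or note that one may first pass to the subdendrite rooted at $p_w$ for the longest common prefix $w$.
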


\section{Sturmian subshift and the main example}\label{sec:constr}
A Sturmian subshift is an extension of an irrational rotation $(S^1, R_\alpha)$ generated by a particular interval cover (see \cite{Kur03}). Every Sturmian subshift is minimal and has zero topological entropy.

Let $\alpha \in (0,1)$ be irrational number and fix cover $[0,1/4)$, $[1/4,1)$ of $S^1$. 
Denote by $S_\alpha = \overline{ O(A(\alpha)) }$ the  Sturmian subshift
defined by the \emph{itinerary} $A(\alpha)$ of a point $\alpha$, that is the infinite sequence of symbols such that
$$A(\alpha)_i=0 \iff R^i_\alpha (\alpha) \in [0,1/4)$$ and
$$A(\alpha)_i=1 \iff R^i_\alpha (\alpha) \in [1/4, 1)$$ for $i \in \N.$

\begin{lem}\label{lem:Sclosed}
Let $C \subset \R \setminus \Q$ be a Cantor set.
Then the set $$S= \bigcup_{\alpha \in C} S_{\alpha}$$ is closed.
Furthermore, if $x_n \in S_{\alpha_n}$, $\lim_{n\to \infty} \alpha_n =\alpha\in C$ and $\lim_{n\to \infty}x_n=x$ exists, then $x\in S_\alpha$.
\end{lem}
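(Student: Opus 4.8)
The plan is to establish the two assertions in sequence. For the second statement (that the limit point lies in $S_\alpha$), I would use the fact that Sturmian subshifts are defined by an itinerary coding and that the coding map depends continuously, in an appropriate sense, on the rotation number. Concretely, recall that $S_\alpha = \overline{O(A(\alpha))}$ and that $S_\alpha$ is precisely the set of all infinite words $w$ such that every finite factor of $w$ appears in $A(\alpha)$, i.e. $\mathcal{L}(S_\alpha)$ is the set of factors of $A(\alpha)$. So to show $x \in S_\alpha$ it suffices to show that for every $k$, the word $x_{[1,k)}$ is a factor of $A(\alpha)$. Fix $k$; since $x_n \to x$, for all large $n$ we have $x_{[1,k)} = (x_n)_{[1,k)} \in \mathcal{L}_k(S_{\alpha_n})$, so $x_{[1,k)}$ is a factor of $A(\alpha_n)$. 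The key point is then that the set of length-$k$ factors of the Sturmian itinerary $A(\beta)$ is "locally constant" in $\beta$ away from a measure-zero exceptional set, or more robustly: a word $u$ of length $k$ is a factor of $A(\beta)$ if and only if the corresponding union of $k$ preimages of the partition intervals under $R_\beta^{-1},\dots,R_\beta^{-(k-1)}$ has nonempty interior containing a point of the (dense) orbit $\{R_\beta^i(\beta)\}$, and the endpoints of these intervals move continuously with $\beta$. I would make this precise: if $x_{[1,k)}$ is a factor of $A(\alpha_n)$ for infinitely many $n$ with $\alpha_n \to \alpha$, then the associated cylinder in $S^1$ has nonempty interior for $\alpha$ as well (a finite intersection of half-open arcs whose endpoints vary continuously, and whose interior-nonemptiness is an open-or-limit-stable condition once one is slightly careful about the half-open endpoints and the fact that $\alpha$ is irrational so orbits avoid the finitely many "bad" endpoints), hence $x_{[1,k)} \in \mathcal{L}_k(S_\alpha)$. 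Letting $k \to \infty$ gives $x \in S_\alpha$.

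Granting the "furthermore" part, closedness of $S$ follows easily. Let $x \in \overline{S}$ and pick $y_n \in S$ with $y_n \to x$; write $y_n \in S_{\alpha_n}$ for some $\alpha_n \in C$. Since $C$ is compact, pass to a subsequence with $\alpha_n \to \alpha \in C$; since $\mathcal{A}^{\mathbb{N}}$ is compact we may also assume $y_n$ converges (it already does, to $x$). By the furthermore part, $x \in S_\alpha \subseteq S$. Hence $S$ is closed. So in fact the two claims are not independent — I would prove the "furthermore" statement first and deduce closedness as a one-line corollary.

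The main obstacle is the careful handling of the half-open partition intervals $[0,1/4)$, $[1/4,1)$ when passing to the limit in $\alpha$. The naive argument "factors of $A(\alpha_n)$ converge to factors of $A(\alpha)$" can fail at the countably many $\beta$ for which some orbit point $R_\beta^i(\beta)$ hits an endpoint $0$ or $1/4$, and more subtly the condition "word $u$ is a factor of $A(\beta)$" is not itself continuous in $\beta$ — it can only gain or lose factors in a controlled way. The right tool is: $u \in \mathcal{L}_k(S_\beta)$ iff $I_u(\beta) := \bigcap_{i=0}^{k-1} R_\beta^{-i}(J_{u_{i+1}})$ has nonempty interior, where $J_0 = [0,1/4)$, $J_1 = [1/4,1)$; the closures $\overline{I_u(\beta)}$ and the interiors $\operatorname{Int} I_u(\beta)$ are finite unions of arcs with endpoints depending continuously and piecewise-linearly on $\beta$, and nonempty interior is preserved under taking limits provided the limiting arc does not degenerate to a point — which one checks using that Sturmian complexity is exactly $k+1$, so the $k+1$ intervals $I_u(\beta)$, $u \in \mathcal{L}_k(S_\beta)$, partition $S^1$ (up to endpoints) into $k+1$ nondegenerate pieces, and this combinatorial structure is stable. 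I would carry out this verification explicitly for the two-interval Sturmian cover, where the $I_u(\beta)$ are genuinely just arcs and the bookkeeping is minimal, and that completes the proof.
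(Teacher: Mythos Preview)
Your approach is valid and takes a genuinely different route from the paper. The paper argues pointwise via the factor maps: it projects each $x_n$ to $z_n=\pi_n(x_n)\in S^1$, passes to a subsequential limit $z$, and splits into two cases according to whether the $R_\alpha$-orbit of $z$ ever meets $\{0,1/4\}$. If it never does, $\pi^{-1}(z)$ is a singleton $\{y\}$ and one checks $x_n\to y$ coordinate by coordinate; if it does (for a unique index $i$, since $\alpha$ is irrational), the fiber consists of two points of $S_\alpha$ differing only at position $i$, and $x$ is forced to equal one of them. Your language-based argument---show that $\mathcal{L}_k(S_\beta)$ is locally constant in $\beta$ at every irrational, hence each prefix of $x$ lies in $\mathcal{L}(S_\alpha)$---also goes through, but two details need correction. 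First, the partition $[0,1/4),[1/4,1)$ is \emph{not} the standard Sturmian cut $[0,1-\beta),[1-\beta,1)$, so the complexity is not $k+1$ in general. Second, the cells $I_u(\beta)$ need not be single arcs (for instance $I_{11}(\beta)=[1/4,1)\cap([1/4,1)-\beta)$ has two components when $\beta$ is slightly above $1/4$). Neither point is fatal: work instead with the partition of $S^1$ by the $2k$ cut points $\{-i\beta,\ 1/4-i\beta:0\le i<k\}$, which are pairwise distinct precisely when $\beta$ is irrational; each of the resulting arcs carries a constant length-$k$ code, the cut points move continuously in $\beta$, and their cyclic order---hence the set of codes that occur---is locally constant near any irrational $\alpha$. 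The paper's fiber argument is shorter and more hands-on; your combinatorial argument is a bit more bookkeeping but transfers unchanged to codings by other interval covers.
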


\begin{proof}
Fix any sequence $(x_n) \subset S$ such that $\lim_{n \to \infty} x_n = x$ exists. There exists a sequence $(\alpha_n) \in C$ such that $x_n \in S_{\alpha_n}$ 
and going to a subsequence we may also assume that the following limit exists $\lim_{n\to \infty} \alpha_n =\alpha\in C$.

Let $\pi_n\colon S_{\alpha_n}\to S^1$ be the standard projection (almost 1-1 conjugacy) and let $\pi \colon S_\alpha \to S^1$.
Denote $z_n=\pi_n(x_n)$ and going to a subsequence assume that $z=\lim_{n\to \infty}z_n$ exists. We may also assume that all $z_n$ belong to a small one sided neighborhood of $z$
and converge monotonically with respect to that neighborhood. Let us consider two cases.

If $R_\alpha^i(z)\not\in \{0,1/4\}$ for every $i\in \N_0$ then there exists a unique point $y$ such that $\pi(y)=z$. Furthermore, for every $N$ there exists $k$
such that if $i\leq k$ and $n\geq N$ then $R_{\alpha_n}^i(z_n)\not\in \{0,1/4\}$. Clearly, for $n$ sufficiently large (or in other words, $z_n$ sufficiently close to $z$) we have $x_n(i)=y(i)$. This immediately implies that $y=x$ and so the claim holds.

For the second case, assume that $R_\alpha^i(z)\in \{0,1/4\}$ for some $i$. Since $\alpha$ is irrational number, there is at most one such integer $i$. For simplicity assume that $i=0$.
Then, by arguments as above we see that for every $i\neq 0$ there is $N$ such that $x_n(i)=y(i)$ for every $n\geq N$. This shows that $x(i)=y(i)$ for every $i\neq 0$.
But since $z\in \{0,1/4\}$, there exists $\hat{y}\in S_\alpha$ such that $\hat{y}(i)=y(i)$ for $i\neq 0$ and $y(0)=1-\hat{y}(0)$. In particular, either $x=y$ or $x=\hat{y}$.
In any case $x\in S_\alpha$, completing the proof.
\end{proof}

\begin{rem}
Sturmian system $S_\alpha$ does not have Li-Yorke pairs. Simply, if $\pi \colon S_\alpha \to S^1$
is factor map and $x,y\in S_\alpha$ is proximal then $\pi(x)=\pi(y)$. But in that case the pair is asymptotic.
\end{rem}

Let us recall Lemma 3.1 from \cite{Pik07} which is a slight generalization of Lemma 2.2 in \cite{Li93}:

\begin{lem}\label{lem:Pikula}
Let $a=a_1a_2a_3 \dots , b=b_1b_2b_3 \dots \in \{0,1\}^{\N}$. Define the following operation:
$$a \diamond b := a_1b_1a_1a_2b_1b_2a_1a_2a_3b_1b_2b_3 \dots$$
Then
\begin{enumerate}[(1)]
\item \begin{align*} \omega (a \diamond b) \supseteq \overline{O(a)} \cup \overline{O(b)}, \end{align*}
\item \begin{align*}
\omega (a \diamond b) \subseteq \overline{O(a)} \cup \overline{O(b)} 
& \cup \{a_i \dots a_j b_1b_2 \dots \colon j \geq i \geq 1\}  \\
& \cup \{b_i \dots b_j a_1a_2 \dots \colon j \geq i \geq 1\}.
\end{align*}
\end{enumerate}
%Moreover, if $a$ and $b$ are recurrent points then in $(2)$ inequality holds.
\end{lem}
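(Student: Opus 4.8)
The plan is to work directly with the block decomposition of $a\diamond b$. Write
$a\diamond b = A_1 B_1 A_2 B_2 A_3 B_3 \cdots$, where $A_k = a_{[1,k+1)}=a_1\cdots a_k$ and $B_k = b_{[1,k+1)}=b_1\cdots b_k$; then every position $n\ge 0$ lies in a unique block ($A_m$ or $B_m$ for a unique $m\ge 1$) at a unique offset $r\in\{0,1,\dots,m-1\}$, and since $A_m$ begins at position $(m-1)m$ and $B_m$ at position $m^2$, the index $m$ of the block containing position $n$ tends to $\infty$ as $n\to\infty$. For part (1), recall that $\omega(a\diamond b)$ is closed and forward invariant under $\sigma$, so it suffices to show $a,b\in\omega(a\diamond b)$ and then pass to the closures of $O(a)$ and $O(b)$. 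But for each $k$ the word $A_k=a_1\cdots a_k$ occurs as the prefix of $\sigma^{(k-1)k}(a\diamond b)$, hence $\sigma^{(k-1)k}(a\diamond b)$ agrees with $a$ on its first $k$ coordinates; since $(k-1)k\to\infty$ this gives $a\in\omega(a\diamond b)$, and the same argument with $B_k$ as the prefix of $\sigma^{k^2}(a\diamond b)$ gives $b\in\omega(a\diamond b)$. Thus $\overline{O(a)}\cup\overline{O(b)}\subseteq\omega(a\diamond b)$.

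For part (2), fix $z\in\omega(a\diamond b)$ and a sequence $n_k\to\infty$ with $\sigma^{n_k}(a\diamond b)\to z$. After passing to a subsequence we may assume every $n_k$ lies in an $A$-block (the $B$-block case being symmetric under exchanging $a$ and $b$), say in $A_{m_k}$ at offset $r_k$, so that
\[\sigma^{n_k}(a\diamond b)=a_{r_k+1}\cdots a_{m_k}\, b_1\cdots b_{m_k}\, a_1\cdots a_{m_k+1}\cdots,\]
with $m_k\to\infty$ and $1\le m_k-r_k\le m_k$. Passing to further subsequences, assume each of $r_k$ and $d_k:=m_k-r_k$ is either eventually constant or tends to $\infty$. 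If $d_k\to\infty$, then $\sigma^{n_k}(a\diamond b)$ and $\sigma^{r_k}(a)$ agree on a prefix of length $d_k$, so $\rho(\sigma^{n_k}(a\diamond b),\sigma^{r_k}(a))\to 0$ and hence $z=\lim_k\sigma^{r_k}(a)\in\overline{O(a)}$ (this covers both $r_k\to\infty$ and $r_k$ constant). If instead $d_k\equiv d$ is constant (forcing $r_k\to\infty$), then since there are only finitely many words of length $d$ we may assume $a_{r_k+1}\cdots a_{m_k}=w$ is a fixed factor of $a$, say $w=a_i\cdots a_j$ with $j=i+d-1\ge i\ge 1$; comparing prefixes of growing length and using $m_k\to\infty$ yields $z=a_i\cdots a_j\, b_1 b_2\cdots$, a point of the third set. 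The symmetric treatment of the $B$-block case gives either a point of $\overline{O(b)}$ or a point $b_i\cdots b_j\, a_1 a_2\cdots$ of the fourth set, so the claimed inclusion holds.

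The bookkeeping — identifying the block containing a shifted position and converting "long common prefix" into "small $\rho$-distance" — is routine; the one point that genuinely needs care is the iterated extraction of subsequences in part (2) so that both the offset $r_k$ and the residual length $m_k-r_k$ stabilize (to a constant or to $\infty$). It is exactly the mixed regime, residual length bounded but offset unbounded, that produces the two extra families $\{a_i\cdots a_j b_1 b_2\cdots\}$ and $\{b_i\cdots b_j a_1 a_2\cdots\}$ beyond $\overline{O(a)}\cup\overline{O(b)}$, so the statement is sharp and these cases cannot be dropped.
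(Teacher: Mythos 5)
Your proof is correct. Note that the paper does not prove this lemma at all: it is quoted verbatim as Lemma 3.1 of Piku\l{}a \cite{Pik07} (itself a slight generalization of Lemma 2.2 in \cite{Li93}), so there is no in-paper argument to compare against. Your block decomposition $a\diamond b=A_1B_1A_2B_2\cdots$ with $A_m=a_1\cdots a_m$ starting at position $(m-1)m$ and $B_m=b_1\cdots b_m$ at position $m^2$ is the natural (and essentially the standard) way to argue: part (1) follows since $\sigma^{(k-1)k}(a\diamond b)$ and $\sigma^{k^2}(a\diamond b)$ have prefixes $A_k$, $B_k$, giving $a,b\in\omega(a\diamond b)$, and closedness plus invariance of the $\omega$-limit set does the rest; part (2) is handled correctly by the subsequence extraction in which the residual length $d_k=m_k-r_k$ either tends to $\infty$ (limit in $\overline{O(a)}$, resp.\ $\overline{O(b)}$) or stabilizes at a constant $d$ (forcing $r_k\to\infty$, and pigeonholing the finitely many length-$d$ words yields a limit of the form $a_i\cdots a_j b_1b_2\cdots$, using that the following $b$-block has length $m_k\to\infty$ so no further symbols survive in the limit). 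The only points that needed care --- that the block index of position $n_k$ tends to infinity, and the iterated passage to subsequences --- are both handled explicitly, so the argument is complete and self-contained.
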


Now we are ready to start our construction of a subshift with a special structure.
Fix any Cantor set $A\subset [0,1/2)\setminus \Q$ and $\beta \in [0,1/2)\setminus (\Q\cup A)$.
Denote $S:=\bigcup_{\alpha\in A}S_\alpha$ and $Z=S_\beta$. By Lemma~\ref{lem:Sclosed} the set $S$ is a subshift and it is also clear that $S\cap Z=\emptyset$.
Since map $\psi \colon (Z,\sigma)\to (S^1, R_\beta)$ is $1-1$ on a residual set and every uncountable Borel set contains a Cantor set (e.g. see \cite{Sri98}), there exists a Cantor set $\hat A\subset Z$.
But any two Cantor sets are homeomorphic, we can index elements of $\hat A$ in the following way $\hat A= \{b_\alpha:\alpha\in A\}\approx A$.
%Since every residual set in Cantor set contains a Cantor set, we may view $A\approx \{b_\alpha:\alpha\in A\}\subset Z$ such that $A$ contains only points covered 1-1 by the standard factor map $\psi \colon (Z,\sigma)\to (S^1, R_\beta)$ belonging to distinct orbits (this is possible by standard method, e.g see by Theorem~5.13.9 in \cite{Siv}). 
For every $\alpha\in A$ let $\pi_\alpha\colon S_\alpha \to S^1$ be the standard factor map, and let $a_\alpha=\pi_\alpha^{-1}(1/8)$. Note that $a_\alpha$
is uniquely defined, since $R^i_\alpha(1/8)\not\in \{0,1/4\}$ for every $i\in \N_0$. Finally put 
$$
x_{\alpha}=a_\alpha \diamond b_\alpha.
$$
By Lemma~\ref{lem:Pikula} we see that
$$\omega(x_{\alpha})=\{S_\alpha, Z\}  \cup \{a_i \dots a_j b_1b_2 \dots \colon j \geq i \geq 1\} \cup \{b_i \dots b_j a_1a_2 \dots \colon j \geq i \geq 1\}.$$
where $a_\alpha=a_1a_2\ldots$ and $b_\alpha=b_1b_2\ldots$.

\begin{lem}\label{lem:scr-triple}
For every $\alpha\in A$ the set $\overline{O(x_\alpha)}$ does not contain scrambled set with more than two points.
\end{lem}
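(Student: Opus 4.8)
The plan is to analyze the structure of $\omega(x_\alpha)$ as described after Lemma~\ref{lem:Pikula} and show that $\overline{O(x_\alpha)}$ decomposes, from the point of view of the dynamics, into pieces each of which contributes at most a little to any scrambled set. Write $a_\alpha = a_1 a_2 \dots$ and $b_\alpha = b_1 b_2 \dots$, so that $a_\alpha\in S_\alpha$ and $b_\alpha\in Z = S_\beta$. Note first that $\overline{O(x_\alpha)} = O(x_\alpha)\cup\omega(x_\alpha)$, and by the formula for $\omega(x_\alpha)$ this set is contained in
$$
O(x_\alpha)\cup S_\alpha\cup Z\cup \{a_i\cdots a_j b_1 b_2\cdots : j\geq i\geq 1\}\cup\{b_i\cdots b_j a_1 a_2\cdots : j\geq i\geq 1\}.
$$
The key observation is that both $S_\alpha$ and $Z$ are Sturmian, hence (by the Remark following Lemma~\ref{lem:Sclosed}) contain no Li--Yorke pair at all; and the remaining ``transitional'' points $a_i\cdots a_j b_1 b_2\cdots$ are, after finitely many shifts, equal to $b_\alpha\in Z$, hence asymptotic to a point of $Z$ (and similarly with the roles of $a$ and $b$ interchanged), while $x_\alpha$ itself is not asymptotic to anything but generates the whole picture.

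The main step is to rule out a three-point scrambled set $\{u,v,w\}\subseteq \overline{O(x_\alpha)}$. I would argue by cases on where these points lie. If two of them, say $u,v$, lie in $S_\alpha$ (or both in $Z$), then $(u,v)$ is not Li--Yorke by the Remark, contradiction. So the three points must be spread so that at most one lies in $S_\alpha$ and at most one in $Z$; the third point must then be either a point on the orbit $O(x_\alpha)$ or one of the transitional points. Now for a transitional point $t = a_i\cdots a_j b_1 b_2\cdots$ we have $\sigma^{j-i+1}(t) = b_\alpha$, so $t$ is asymptotic to the point $b_\alpha\in Z$; consequently if $t$ and some $v\in Z$ formed a Li--Yorke pair, then $v$ and $b_\alpha$ would be proximal in $Z$, hence (Sturmian!) asymptotic, forcing $t$ and $v$ to be asymptotic --- contradiction. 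The same reasoning applies with $a,b$ swapped. The only genuinely delicate case is a scrambled pair between a point of $O(x_\alpha)$ (including $x_\alpha = a_\alpha\diamond b_\alpha$ and its shifts) and the rest: here one uses that every shift $\sigma^k(x_\alpha)$ still satisfies $\omega(\sigma^k(x_\alpha)) = \omega(x_\alpha) \supseteq S_\alpha\cup Z$, and that a point of $O(x_\alpha)$ can be proximal to a point in $S_\alpha$ and to a point in $Z$, but the obstruction to having \emph{three} mutually scrambled points is that the two Sturmian ``ends'' are asymptotically rigid: any second point sharing a proximal relation with, say, the $Z$-end is forced by Sturmian rigidity to be asymptotic to it.

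Concretely, I would establish the following two lemmas-within-the-proof and then combine them. (i) If $y\in \overline{O(x_\alpha)}$ and $(y,z)$ is proximal with $z\in Z$, then $y$ is asymptotic to some point of $Z$; symmetrically with $S_\alpha$. This follows because proximality in $\Sigma_2^+$ to a point of a Sturmian system forces the tails to agree with a Sturmian tail, and Sturmian proximal pairs are asymptotic. (ii) Every pair of points of $\overline{O(x_\alpha)}$ lying in the same ``part'' ($S_\alpha$, or $Z$, or being mutually asymptotic) fails to be scrambled. Given (i) and (ii): in any set $\{u,v,w\}$ of three points, at most one can fail to be proximal-to-$Z$-or-to-$S_\alpha$ in a way that makes it asymptotic to the corresponding Sturmian system; a counting/pigeonhole argument then produces two of the three points that are both asymptotic to the same Sturmian part and proximal to each other, hence (using that asymptotic-to-the-same-point implies asymptotic-to-each-other, then Sturmian rigidity) asymptotic to each other --- contradicting scrambledness.

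The hardest part, which I would spend the most care on, is case (i): showing that proximality of a point $y\in\overline{O(x_\alpha)}$ with a point of $Z$ really forces $y$ to be eventually a tail of a $Z$-point. This requires knowing the fine combinatorial structure of $\overline{O(x_\alpha)}$ --- namely that every point of it is either in $S_\alpha\cup Z$, or on the orbit $O(x_\alpha)$, or a transitional point of the explicit form above --- and then checking, using the block structure $a_1 b_1 a_1 a_2 b_1 b_2 \cdots$ of $x_\alpha$, that proximality to $Z$ is incompatible with having infinitely many ``$a$-blocks'' surviving in the tail. Once this combinatorial fact is pinned down, the rest is the soft pigeonhole argument above, and the conclusion that $\overline{O(x_\alpha)}$ admits no scrambled set with more than two points follows.
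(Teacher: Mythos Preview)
Your claim (i) is false, and this breaks the argument. You yourself note earlier that ``a point of $O(x_\alpha)$ can be proximal to a point in $S_\alpha$ and to a point in $Z$,'' and indeed this is exactly what happens: every point is proximal to some minimal point in its orbit closure, and since $Z\subset\omega(x_\alpha)$ is minimal, $x_\alpha$ is proximal to some $z\in Z$. But $x_\alpha$ is certainly not asymptotic to any point of $Z$, because $\omega(x_\alpha)\supseteq S_\alpha$ and $S_\alpha\cap Z=\emptyset$. So proximality to $Z$ does \emph{not} force the tail to be a $Z$-tail; the combinatorial claim in your last paragraph (``proximality to $Z$ is incompatible with having infinitely many $a$-blocks surviving in the tail'') fails already for $y=x_\alpha$ itself. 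In fact $(x_\alpha,z)$ is a genuine Li--Yorke pair, which is perfectly consistent with the lemma (it only forbids scrambled \emph{triples}), but it shows your pigeonhole scheme based on (i) cannot work: points of $O(x_\alpha)$ are proximal to both Sturmian ends without being asymptotic to either.

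The paper's route avoids this trap by partitioning $\overline{O(x_\alpha)}=O(x_\alpha)\cup\omega(x_\alpha)$ and showing there is no Li--Yorke pair \emph{inside either piece}. Inside $\omega(x_\alpha)$ your analysis is essentially correct and is what is needed: every point of $\omega(x_\alpha)$ lands after finitely many iterates in $S_\alpha$ or in $Z$; two points landing in the same Sturmian subshift give no Li--Yorke pair by the Remark, and one in each gives a distal (hence non-proximal) pair since $S_\alpha$ and $Z$ are disjoint minimal sets. The half you are missing is the other one: two distinct points $\sigma^m(x_\alpha),\sigma^n(x_\alpha)\in O(x_\alpha)$ are never proximal. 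The paper asserts this tersely (invoking that $x_\alpha$ is not eventually periodic); a more explicit justification uses that for $p=n-m\neq 0$ the pairs $(a_\alpha,\sigma^p a_\alpha)$ and $(b_\alpha,\sigma^p b_\alpha)$ are distal in their respective Sturmian systems, so neither $a_\alpha$ nor $b_\alpha$ contains arbitrarily long $p$-periodic subwords, and the block structure of $a_\alpha\diamond b_\alpha$ then forbids arbitrarily long agreement between $x_\alpha$ and $\sigma^p(x_\alpha)$. Once both halves are in place, every Li--Yorke pair has exactly one point in $O(x_\alpha)$ and one in $\omega(x_\alpha)$, and a three-point scrambled set is excluded by pigeonhole.
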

\begin{proof}
It is clear, that any two points $x,y\in O(x_\alpha)$ are not proximal, because $x_\alpha$ is not eventually periodic.
Similarly, if $x\in S_\alpha$ and $y\in Z$, they cannot be proximal. This implies that the only possibility when $x,y$ are Li-Yorke pair,
is that $x\in O(x_\alpha)$ and $y\in \omega(x_\alpha)$. Indeed, there is no three points scrambled set.
\end{proof}

Denote
\begin{equation}
\mathbb{X}=\overline{\bigcup_{\alpha\in A} O(x_{\alpha})}\label{eq:X}
\end{equation}

\begin{lem}\label{lem:chaos}
The shift $\mathbb{X}$ has an uncountable $\omega$-scrambled set and does not have an infinite scrambled set.
\end{lem}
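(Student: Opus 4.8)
The strategy is to prove the two assertions separately, exploiting the explicit description of $\omega(x_\alpha)$ provided by Lemma~\ref{lem:Pikula} and the structure of $\mathbb{X}=\overline{\bigcup_{\alpha\in A}O(x_\alpha)}$.

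For the $\omega$-scrambled set, the natural candidate is $S=\{x_\alpha:\alpha\in A\}$, which is uncountable since $\alpha\mapsto x_\alpha$ is injective (different $\alpha$ give different $a_\alpha\in S_\alpha$, hence different words $x_\alpha$). I need to verify the three defining conditions for a pair $x_\alpha\neq x_{\alpha'}$. Condition (ii), that $\omega_f(x_\alpha)\cap\omega_f(x_{\alpha'})$ is nonempty, is immediate: both $\omega$-limit sets contain $Z=S_\beta$. Condition (iii), that $\omega(x_\alpha)$ is not contained in the periodic points, follows because $S_\alpha\subset\omega(x_\alpha)$ is an infinite minimal set (irrational rotation factor) with no periodic points. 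Condition (i), that $\omega(x_\alpha)\setminus\omega(x_{\alpha'})$ is uncountable, is the main point: by the formula for $\omega(x_\alpha)$, this set contains $S_\alpha$ together with the "tail sets" $\{a_i\cdots a_j b_1b_2\cdots\}$; one must check that $S_\alpha$ is not swallowed by $\omega(x_{\alpha'})$, i.e. $S_\alpha\not\subseteq\omega(x_{\alpha'})$. Since $S_\alpha$ is uncountable, it suffices to show $S_\alpha\cap\omega(x_{\alpha'})$ is countable; and indeed, intersecting $S_\alpha$ with the four pieces comprising $\omega(x_{\alpha'})$: $S_\alpha\cap S_{\alpha'}=\emptyset$ (disjoint Sturmian subshifts, as $\alpha\neq\alpha'$), $S_\alpha\cap Z=\emptyset$, and $S_\alpha$ meets each of the two countable tail-families in at most countably many points. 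Hence $S_\alpha\setminus\omega(x_{\alpha'})$ is uncountable, giving (i).

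For the absence of an infinite LY-scrambled set, the plan is to analyze which pairs in $\mathbb{X}$ can be Li–Yorke and show no three-point scrambled set exists. First I would pin down $\mathbb{X}$: by Lemma~\ref{lem:Sclosed} and the $\omega(x_\alpha)$ formula one checks that $\mathbb{X}=\bigcup_{\alpha\in A}O(x_\alpha)\ \cup\ S\ \cup\ Z\ \cup\ (\text{the tail words})$, so every point of $\mathbb{X}$ lies on some orbit $O(x_\alpha)$ or in $S_\alpha$ (some $\alpha\in A$) or in $Z$ or is a tail word $a_i\cdots a_j b_1b_2\cdots$ / $b_i\cdots b_j a_1a_2\cdots$. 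The key observations are: (a) a point $x_\alpha$ is not eventually periodic, so no two distinct points of $O(x_\alpha)$ are proximal; (b) points lying in different $S_\alpha$'s, or one in some $S_\alpha$ and one in $Z$, or in two distinct orbits $O(x_\alpha),O(x_{\alpha'})$ with $\alpha\neq\alpha'$, are never proximal — because any proximality forces the two "rotation coordinates" to converge, which across disjoint minimal Sturmian pieces is impossible; and within a single Sturmian piece proximal implies asymptotic (the Remark). So, as in Lemma~\ref{lem:scr-triple}, a Li–Yorke pair in $\mathbb{X}$ must have one coordinate in a single orbit $O(x_\alpha)$ and the other in $\omega(x_\alpha)$, all for the \emph{same} $\alpha$. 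Now suppose $S'\subseteq\mathbb{X}$ is LY-scrambled with three distinct points $u,v,w$. Pairwise Li–Yorke-ness, together with the previous dichotomy, forces all three to be associated to one and the same $\alpha$, hence $S'\subseteq\overline{O(x_\alpha)}$; but Lemma~\ref{lem:scr-triple} says $\overline{O(x_\alpha)}$ contains no scrambled set of more than two points — contradiction. Therefore $\mathbb{X}$ has no three-point scrambled set, a fortiori no infinite one.

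The main obstacle I anticipate is the "cross" non-proximality claims in the second part — rigorously ruling out Li–Yorke pairs whose two coordinates come from different Sturmian systems $S_\alpha, S_{\alpha'}$, or from a Sturmian system and the diamond orbit of a \emph{different} index, or involving the tail words. One must leverage that each $S_\alpha$ is a factor of the rotation $R_\alpha$ via $\pi_\alpha$ and that the (closed) family $\bigcup_{\alpha\in A}S_\alpha$ splits into the pairwise-disjoint minimal pieces $S_\alpha$ (Lemma~\ref{lem:Sclosed} guarantees limits stay inside the appropriate $S_\alpha$), so that a proximal pair is trapped in one minimal piece and the Remark applies; handling tail words requires noting their orbit closures are again contained in $S_\alpha\cup Z$, so they contribute nothing new. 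Once this bookkeeping is done, Lemma~\ref{lem:scr-triple} closes the argument cleanly.
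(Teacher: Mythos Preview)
Your overall strategy coincides with the paper's: first argue that $\mathbb{X}=\bigcup_{\alpha\in A}\overline{O(x_\alpha)}$ (closure of the union equals union of closures), then show that no Li--Yorke pair can straddle two different orbit closures $\overline{O(x_\alpha)}$ and $\overline{O(x_\gamma)}$, and finish with Lemma~\ref{lem:scr-triple}. Your $\omega$-scrambled verification is correct and in fact slightly more careful than the paper's (you check that $S_\alpha\cap\omega(x_{\alpha'})$ is at most countable; the paper simply asserts $S_\alpha\subset\omega(x_\alpha)\setminus\omega(x_\gamma)$).

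The one place where your plan is thinner than the paper is precisely the step you flag as the obstacle: ruling out proximality between $x\in O(x_\alpha)$ and $y\in O(x_\gamma)$ for $\alpha\neq\gamma$. Your heuristic that ``proximality forces the two rotation coordinates to converge'' does not apply as stated, because the diamond words $x_\alpha$ are not in any Sturmian system and carry no single rotation coordinate (they interleave $S_\alpha$-blocks and $Z$-blocks). The paper's mechanism is more concrete: any accumulation point of a proximal pair $(\sigma^{j_n}x,\sigma^{j_n}y)$ must lie in $\omega(x_\alpha)\cap\omega(x_\gamma)=Z$, so at the times $j_n$ both orbits are in the $b$-block portion of the diamond word, i.e.\ $\sigma^{j_n}(x)$ agrees with an iterate $\sigma^{p}(b_\alpha)$ and $\sigma^{j_n}(y)$ with an iterate $\sigma^{q}(b_\gamma)$ on long initial segments. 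But $b_\alpha,b_\gamma$ were chosen as distinct points of $Z$ lying over singleton fibers of the factor map onto $(S^1,R_\beta)$, hence $\sigma^{p}(b_\alpha)$ and $\sigma^{q}(b_\gamma)$ are distal in $Z$ and cannot approach a common $z$ simultaneously. This is the missing ingredient; once you supply it, your plan and the paper's proof coincide.
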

\begin{proof}
First we claim that
$$
\mathbb{X}=\overline{\bigcup_{x_{\alpha}} O(x_{\alpha})}= \bigcup_{x_{\alpha}} O(x_{\alpha}) \cup \omega(x_{\alpha})=\bigcup_{x_{\alpha}} \overline{O(x_{\alpha})}.
$$
By the argument in the proof of Lemma~\ref{lem:Sclosed} we see that if $x_{\alpha_n}\to x$ and $\alpha=\lim_{n\to \infty}\alpha_n$ then $\lim_{n\to \infty}x_{\alpha_n}=x_\alpha$,
mainly because all points $x_{\alpha_n}$ project onto point $1/8\in S^1$ which has singleton fiber (we can repeat argument from the proof of Lemma~\ref{lem:Sclosed}). 
%Extending the above argument, we see that for every $i\in \N_0$ we have
%$\lim_{n\to \infty}R^i_{\alpha_n}(x_{\alpha_n})=R_\alpha^i(x_\alpha)$.

Now, fix any sequence of points $(x_n)\subset \bigcup_{\alpha\in A} O(x_{\alpha})$ and let $x=\lim_{n\to \infty} x_n$.
We~assume, going to a subsequence if necessary that $x_n\in O(x_{\alpha_n})$ and $\lim_{n\to \infty}\alpha_n=\alpha$.

There are four possible cases (after going to a subsequence):
\begin{enumerate}[(1)]
	\item For every $n$  there exists $a_n \in S_\alpha$ such that $d(a_n,x_n)<1/n$. In this case, we may also assume that $\lim_{n\to \infty}a_n=a$ exists.
	But then $a\in S_\alpha$ by the arguments in Lemma~\ref{lem:Sclosed} and we also must have $a=x$, so the claim holds.
	\item For every $n$  there exists $b_n \in Z$ such that $d(b_n, x_n)<1/n$, and then $x\in Z$.
	\item For every $n$ there exist  $i_n<j_n$ such that $x_n=a^n_{i_n} \dots a^n_{j_n} b^n_1 b^n_2$ where $b^n=b_{\alpha_n}\in Z$ and $a^n=a_{\alpha_n}\in S_{\alpha_n}$ are points from the definition. First suppose that $j_n-i_n\to \infty$. Then $\lim_{n\to \infty}x_n=\lim_{n\to \infty}\sigma^{i_n}(a_{\alpha_n})\in S_\alpha$ by Lemma~\ref{lem:Sclosed}. In the second case, when $i_n-j_n\leq k$ for every $n$, there exists a word $w$ (which is a subword of every $x_{n}$, so also a subword of $x_\alpha$), such that $x_{n}=w b^n_1b^n_2 \ldots$ and then $\lim_{n\to \infty} x_{n}=w b_1b_2\ldots$ where $b_\alpha=b_1b_2\ldots$ so also in this case the claim holds.
	\item For every $n$ there exists  $i_n>j_n$ such that $x_n=b^n_{i_n} \dots b^n_{j_n} a^n_1 a^n_2$. This case is analogous to the previous one.
\end{enumerate}
The claim is proved. Now the rest of proof is simple. Namely, if $D$ is a scrambled set with at least three points, then by Lemma~\ref{lem:scr-triple}
we see that there are distinct $\alpha,\gamma\in A$ such that $x\in \overline{O(x_\alpha)}$ and 
$y\in \overline{O(x_\gamma)}$ form a Li-Yorke pair (that is $x\not\in \overline{O(x_\gamma)}$ and $y\not\in\overline{O(x_\alpha)}$).
Since $\dist(S_\alpha ,S_\beta)>0$, the only possibility is that points $(x,y)$ are proximal through set $Z$, that is, there is $z\in Z$ and $j_n$ such that $d(\sigma^{j_n}(x),z)<1/n$ and  $d(\sigma^{j_n}(y),z)<1/n$. But then the only possibility by Lemma~\ref{lem:Pikula} is that $\sigma^{j_n}(x)=\sigma^{j_n+p}(x_\alpha)$ and $\sigma^{j_n}(y)=\sigma^{j_n+q}(x_\gamma)$, for some numbers $p, q\in \N_0$. But it is impossible, since when $x_\alpha$ is close to $Z$ it follows the trajectory of $b_\alpha$ while $x_\gamma$ follows the trajectory of $b_\gamma$. But $b_\alpha$ and $b_\gamma$ belong to distinct trajectories from singleton fibers through projection of $Z$ onto $S^1$, therefore their itineraries $\sigma^p(b_\alpha)$, $\sigma^q(b_\gamma)$ are distal, in particular cannot approach common point $z$ simultaneously.
Indeed, there is not scrambled set with more than two points.

On the other hand, the set $C=\{x_\alpha : \alpha \in A\}$ is $\omega$-scrambled, because $Z\subset \bigcap_{\alpha \in A}\omega(x_\alpha)$
and for $\alpha\neq \gamma$ we have $S_\alpha \subset \omega(x_\alpha)\setminus \omega(x_\gamma)$.
The proof is completed.
\end{proof}

\begin{proof}[Proof of Theorem~\ref{thm:main}]
Let $\mathbb{X}$ be provided by \eqref{eq:X}. If we put $D=D_{\mathbb{X}}$ then by Lemma~\ref{lem:subdendrite} $D$ is a Gehman dendrite and the induced map $f\colon D \to D$ is well defined continuous surjection.  We may view $\mathbb{X} \subset D$ as a set of endpoints of $D$ invariant for $f$.
Furthermore, there is a unique fixed point $p\in D\setminus \mathbb{X}$ such that, if $x\not\in \mathbb{X}$ then $\lim_{n\to \infty}f^n(x)=p$.
Then it is clear that the only Li-Yorke pairs for $f$ are these contained in $\mathbb{X}\subset D$. The statement of Lemma~\ref{lem:chaos} completes the proof.
\end{proof}

\section*{Acknowledgements}

Research of P. Oprocha was supported by National Science Centre, Poland (NCN), grant no. 2015/17/B/ST1/01259.

%\bibliographystyle{plain}
%\bibliography{sturmian}

\begin{thebibliography}{10}
	\bibitem{Are01} D. Ar\'evalo, W. Charatonik, P. Pellicer Covarrubias, L. Sim\'on, \textit{Dendrites with a closed set of end points}. Topology Appl. \textbf{115} (2001), no. 1, 1--17.
	\bibitem{Drw18}T.~Drwi\k{e}ga, \emph{Dendrites and chaos}, to appear in Internat. J. Bifur. Chaos Appl. Sci. Engrg., to appear
%	\bibitem{Geh25}H. M.~Gehman, \emph{Concering the subsets of a plane continuous curve}, Ann. Math.  \textbf{27} (1925), 29--46.
	\bibitem{Koc12} Z.~Ko\v{c}an, \emph{Chaos on one-dimensional compact metric spaces}, Internat. J. Bifur. Chaos Appl. Sci. Engrg. \textbf{22}, article id: 1250259 (2012).
	\bibitem{KoKor11} Z.~Ko\v{c}an, V.~Korneck\'{a}-Kurkov\'{a} \&  M.~ M\'{a}lek \emph{Entropy, horseshoes and homoclinic trajectories on trees, graphs and dendrites}, Ergod. Th. Dyn. Sys. \textbf{31}, (2011) 165--175, Erratum: 177--177.
	\bibitem{Kur03}P.~K\r{u}rka \emph{Topological and Symbolic Dynamics}, Societe Mathematique de France, 2003.
	\bibitem{Li93}S. H.~Li, \emph{{$\omega$}-chaos and topological entropy}, Trans. Amer. Math. Soc. \textbf{339} (1993), 243--249.
	\bibitem{Liyor75}T. Y.~Li, J.~Yorke, \emph{Period three implies chaos}, Amer. Math. Monthly \textbf{82} (1975), 985--992.
	\bibitem{Pik07} R.~Piku\l{}a, \emph{On some notions of chaos in dimension zero}, Colloq. Math. \textbf{107}  (2007), 167--177.	
	\bibitem{Sri98} S. M.~Srivastava, \emph{A Course on Borel Sets}, Graduate Texts in Mathematics, \textbf{vol. 180}, Springer-Verlag, New York, 1998.
\end{thebibliography}
%\end{multicols}
\end{document}